\date{}
\renewcommand{\uppercasenonmath}[1]{}
\theoremstyle{plain}
\theoremstyle{plain}
\newtheorem{theorem}{Theorem}[section]
\newtheorem{proposition}[theorem]{Proposition}
\newtheorem{lemma}[theorem]{Lemma}
\newtheorem{corollary}[theorem]{Corollary}
\newtheorem*{open question}{Open Question}
\newtheorem{definition}[theorem]{Definition}
\theoremstyle{definition}
\newtheorem*{acknowledgement}{Acknowledgement}
\theoremstyle{remark}
\newtheorem{remark}[theorem]{Remark}
\newcommand{\A}{\mathcal{A}}
\newcommand{\Q}{\mathcal{Q}}
\def\GV{{\rm GV}}
\def\tor{{\rm tor}}
\def\Hom{{\rm Hom}}
\def\Ext{{\rm Ext}}
\def\E{{\rm E}}
\def\Nil{{\rm Nil}}
\def\H{{\rm H}}
\def\Z{{\rm Z}}
\def\GV{{\rm GV}}
\def\Max{{\rm Max}}
\def\DW{{\rm DW}}
\def\T{{\rm T}}
\def\E{{\rm E}}
\def\dep{{\rm depth}}
\def\DQ{{\rm DQ}}
\def\Spec{{\rm Spec}}
\newcommand{\m}{\frak{m}}
\newcommand{\p}{\frak{p}}
\newcommand{\q}{\frak{q}}
\newcommand{\M}{\frak{M}}
\newcommand{\Pp}{\frak{P}}
\newcommand{\Qq}{\frak{Q}}
\newcommand{\I}{I}
\newcommand{\J}{J}
\def\Min{{\rm Min}}
\begin{document}
\begin{center}
{\large  \bf  $q$-Krull dimension of commutative rings}

\vspace{0.5cm}  Xiaolei Zhang$^{a}$\\

{\footnotesize a.\ School of Mathematics and Statistics, Shandong University of Technology, Zibo 255000, China\\

 E-mail: zxlrghj@163.com\\}
\end{center}

\bigskip
\centerline { \bf  Abstract}
\bigskip
\leftskip10truemm \rightskip10truemm \noindent

In this paper, we introduce and study the $q$-Krull dimension of a commutative ring via its $q$-operation. A new characterization of $\tau_q$-von Neumann regular rings is obtained, and some properties of rings  $q$-Krull dimension 0 are studied. Moreover, we characterize  the $q$-Krull dimension of $\tau_q$-Noetherian rings in terms of Krull dimension of some total quotient rings. Besides, we show that every ring with $w$-Krull dimension $0$ is a \DW\  ring, positively answers an question proposed by H. Kim.
\vbox to 0.3cm{}\\
{\it Key Words:} $q$-Krull dimension; $\tau_q$-von Neumann regular ring; $\tau_q$-Noetherian ring; $w$-Krull dimension.\\
{\it 2010 Mathematics Subject Classification:} 13C11; 13B30; 13D05.

\leftskip0truemm \rightskip0truemm
\bigskip

\section{Introduction}
Throughout this paper, we always assume $R$ is a commutative ring with identity. For a ring $R$, we denote by $\T(R)$ the total quotient ring of $R$, $\Min(R)$ the  minimal prime spectrum of $R$, $\Nil(R)$ the nil radical of $R$.

Many classical rings and modules can be generalized by using star operations. $w$-operation is one of the important operation which is studied by many algebraists (see \cite{fk16} for a summary). 
In 2020, Zhou et al. \cite{ZDC20} introduced the notion of $q$-operations over commutative rings utilizing finitely generated semi-regular ideals.  Since every nonzero ideal over integral domain is semi-regular,  $q$-operations over integral domains are trivial. Based on this viewpoint, $q$-operations shed a little light in the area of commutative rings with zero-divisors.  
Recently, Zhang and Qi \cite{ZQ23} and Zhang \cite{zq-ps} introduced the $\tau_q$-semisimple rings, $\tau_q$-von Neumann regular rings and $\tau_q$-coherent rings, which can be seen as a generalization of semisimple rings, von Neumann regular rings and coherent rings using using $q$-operations. In particular, A ring is $\tau_q$-semisimple ring if and only if it is reduced with finite minimal prime spectrum, and a ring is $\tau_q$-von Neumann regular if and only if it is reduced with compact minimal prime spectrum.

As our work involves $q$-operations, we give a brief introduction on them. For more details, refer to  \cite{ZDC20}. Let $R$ be a ring and $A,B\subseteq R$. Denote by $(A:_RB):=\{r\in R\mid Br\subseteq A\}$.
Recall that an ideal $I$ of $R$ is said to be \emph{dense} if $(0:_RI)=0$, and \emph{semi-regular} if there exists a finitely generated dense sub-ideal of $I$. The set of all finitely generated semi-regular ideals of $R$ is denoted by $\Q$. Let $M$ be an $R$-module. Denote by
 \begin{center}
{\rm $\tor_{\Q}(M):=\{x\in M|Ix=0$, for some $I\in \Q \}.$}
\end{center}
Recall from \cite{wzcc20} that an $R$-module $M$ is said to be \emph{$\Q$-torsion} (resp., \emph{$\Q$-torsion-free}) if $\tor_{\Q}(M)=M$ (resp., $\tor_{\Q}(M)=0$). The class of $\Q$-torsion modules is closed under submodules, quotients and direct sums, and the class of $\Q$-torsion-free modules is closed under submodules and direct products. A $\Q$-torsion-free module $M$ is called a \emph{Lucas module} if $\Ext_R^1(R/J,M)=0$ for any $J\in \Q$, and the \emph{Lucas envelope} of $M$ is given by
\begin{center}
{\rm $M_q:=\{x\in \E_R(M)|Ix\subseteq M$, for some $I\in \Q \},$}
\end{center}
where $\E_R(M)$ is the injective envelope of $M$ as an $R$-module.
By  \cite[Theorem 2.11]{wzcc20}, $M_q=\{x\in \T(M[x])|Ix\subseteq M$, for some $I\in \Q \}.$ A $\Q$-torsion-free module $M$ is called \emph{$\tau_q$-finitely generated} provided that there is a finitely generated submodule $N$ of $M$ such that $N_q=M_q$.
Obviously, a $\Q$-torsion-free $M$ is a Lucas module if and only if $M_q=M$. A \emph{$\DQ$ ring} $R$ is a ring for which every $R$-module is a Lucas module.
By \cite[Proposition 2.2]{fkxs20}, $\DQ$ rings are exactly rings with small finitistic dimensions equal to $0$.
Recall from \cite{ZDC20} that an submodule $N$ of a  $\Q$-torsion free module $M$ is called a \emph{$q$-submodule} if $N_q\cap M=N$. If an ideal $I$ of $R$ is a $q$-submodule of $R$, then $I$ is also called a \emph{$q$-ideal} of $R$. Follow from \cite[Proposition 2.7]{ZDC20}, a prime ideal $\p$ is a $q$-ideal if and only if $\p_q\cap R\not=R$, if and only if $\p$ is a non-semiregular ideal.  A \emph{maximal $q$-ideal} is an ideal of $R$ which is maximal among the $q$-submodules of $R$. The set of all maximal $q$-ideals is denoted by $q$-$\Max(R)$, and it is the set of all maximal non-semi-regular ideals of $R$, and thus is non-empty and a subset of $\Spec(R)$ (see
\cite[Proposition 2.5, Proposition 2.7]{ZDC20}).

\section{The  $q$-Krull dimension of Rings}

Let $R$ be a commutative ring. It is well-known that the Krull dimension of $R$ is the supremum of the heights of all maximal ideals of $R$. Since  a prime ideal is a $q$-ideal if and only if it is a non-semiregular ideal, every prime ideal contained in a prime  $q$-ideal is also a prime  $q$-ideal. Hence, we can give a new dimension by considering   the supremum of the heights of all maximal $q$-ideals.

\begin{definition}
Let $R$ be a ring. The $q$-Krull dimension of $R$ is defined to be
\begin{center}
	$q$-$\dim(R)=\sup\{ht \p\mid\p\in q$-$\Max(R)\}.$
\end{center}
\end{definition}

From the definition, the $q$-Krull dimension of $R$ is the supremum of lengths of chains of $q$-prime ideals.  So $q$-$\dim(R)\leq \dim(R);$ and  if $R$ is a $\DQ$-ring, then $q$-$\dim(R)=\dim(R).$

Recall from \cite{ZQ23} that a ring $R$ is called \emph{$\tau_q$-von Neumann regular} if $R_{\m}$ is a von Neumann regular ring for any $\m\in q$-$\Max(R)$, which is equivalent to that for any finitely generated ideal $K$ of $R$, there exists an $I\in\Q$ such that $IK=K^2$; or
 $R_{\m}$ is a von Neumann regular ring for any $\m\in q$-$\Max(R)$; or
  $\T(R[x])$ is a von Neumann regular ring;
or   $R$ is a reduced ring and $\Min(R)$ is compact (see \cite[Theorem 4.9]{ZQ23}). The following result give a new characterization of $\tau_q$-von Neumann regular rings.
\begin{theorem}\label{ncqvon}
	A ring $R$ is a $\tau_q$-von Neumann regular ring if and only if $R$ is reduced with $q$-$\dim(R)=0$.
\end{theorem}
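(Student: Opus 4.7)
The plan is to use the local characterization of $\tau_q$-von Neumann regularity recalled just before the theorem: $R$ is $\tau_q$-von Neumann regular if and only if $R$ is reduced and $R_{\mathfrak{m}}$ is von Neumann regular for every $\mathfrak{m}\in q\text{-}\Max(R)$. Once this is at hand, the theorem reduces to showing that, for a reduced ring $R$, the condition $q\text{-}\dim(R)=0$ is equivalent to $R_{\mathfrak{m}}$ being von Neumann regular at every maximal $q$-ideal. The bridge between the two sides is the elementary fact that a local ring is von Neumann regular if and only if it is a field, i.e.\ if and only if it is reduced and has a unique prime ideal.

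For the forward direction, I would assume $R$ is $\tau_q$-von Neumann regular. Reducedness is immediate from the cited characterization. To see that $q\text{-}\dim(R)=0$, let $\mathfrak{m}\in q\text{-}\Max(R)$. Since $R_{\mathfrak{m}}$ is a local von Neumann regular ring, it is a field; thus $\mathfrak{m}R_{\mathfrak{m}}$ is the only prime of $R_{\mathfrak{m}}$. By the order-preserving correspondence between primes of $R$ contained in $\mathfrak{m}$ and primes of $R_{\mathfrak{m}}$, the ideal $\mathfrak{m}$ is a minimal prime of $R$, so $\mathrm{ht}(\mathfrak{m})=0$. Taking the supremum gives $q\text{-}\dim(R)=0$.

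For the reverse direction, assume $R$ is reduced with $q\text{-}\dim(R)=0$. Pick any $\mathfrak{m}\in q\text{-}\Max(R)$. The hypothesis forces $\mathrm{ht}(\mathfrak{m})=0$, so $\mathfrak{m}$ is a minimal prime of $R$. Therefore $R_{\mathfrak{m}}$ has a unique prime, namely $\mathfrak{m}R_{\mathfrak{m}}$. Because localization preserves reducedness, $R_{\mathfrak{m}}$ is reduced; combined with having a unique prime, its nilradical (which coincides with the intersection of all primes) is zero and equal to $\mathfrak{m}R_{\mathfrak{m}}$, making $R_{\mathfrak{m}}$ a field and hence von Neumann regular. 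Invoking the cited local characterization concludes that $R$ is $\tau_q$-von Neumann regular.

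The argument is essentially a one-step localization exercise, so I do not expect any serious obstacle; the only point requiring care is the correct translation ``$\mathrm{ht}(\mathfrak{m})=0$ in $R$ implies $\mathfrak{m}R_{\mathfrak{m}}$ is the unique prime of $R_{\mathfrak{m}}$'', which uses that every prime of $R$ lying inside a minimal prime must equal that prime. All other steps are direct applications of the characterizations of $\tau_q$-von Neumann regular rings and maximal $q$-ideals already recorded in the excerpt.
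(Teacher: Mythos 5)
Your proposal is correct and takes essentially the same route as the paper: both directions reduce, via the characterization of $\tau_q$-von Neumann regular rings recalled before the theorem (localization at maximal $q$-ideals plus reducedness), to the fact that $R_{\mathfrak{m}}$ is a field exactly when $\mathfrak{m}$ has height $0$ and $R_{\mathfrak{m}}$ is reduced. Your forward direction is in fact a touch more direct than the paper's, which establishes $\mathfrak{p}=\mathfrak{m}$ for a prime $q$-ideal $\mathfrak{p}\subseteq\mathfrak{m}$ by comparing localizations at all maximal $q$-ideals and Lucas envelopes ($\mathfrak{p}_q=\mathfrak{m}_q$), whereas you simply read off $ht(\mathfrak{m})=\dim(R_{\mathfrak{m}})=0$ from the prime correspondence.
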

\begin{proof}
	Let $R$ be a $\tau_q$-von Neumann regular ring. Then  $R$ is reduced by \cite[Theorem 4.9]{ZQ23}. Let $\p$ be a prime  $q$-ideal of $R$. Then there exists a maximal $q$-ideal $\m$ of $R$ such that  $\p\subseteq \m$. By localizing at $\m$, we have $\p_\m=\m_\m$ since $R_\m$ is a local von Neumann regular ring which is a field  	 by \cite[Theorem 4.9]{ZQ23}. By localizing at other maximal $q$-ideal $\q$ of $R$, we also have  $\p_\q=\m_\q(=R_\q)$. Hence
	$\p=\p_q=\m_q=\m$ by \cite[Proposition 2.7(6)]{ZDC20}. Consequently, $q$-$\dim(R)=0$.
	
	On the other hand, suppose   $R$ is a reduced ring with $q$-$\dim(R)=0$. Let $\m$ be a maximal $q$-ideal of $R$, then $ht(\m)=0$. So $R_\m$ is a reduced ring with $\dim(R_\m)=0$. It follows from \cite[Remark,Page 5]{H88}  that $R_\m$ is a von Neumann regular ring. Hence $R$ is $\tau_q$-von Neumann regular ring by \cite[Theorem 4.9]{ZQ23}.
\end{proof}

\begin{proposition}\label{nil}
	Let $R$ be a ring and $\Nil(R)$ the nil-radical of $R$. Then $q$-$\dim(R)\geq q$-$\dim(R/\Nil(R)).$
\end{proposition}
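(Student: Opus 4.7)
The plan is to lift chains of $q$-prime ideals from $R/\Nil(R)$ back to $R$. Using the standard bijection between $\Spec(R/\Nil(R))$ and $\Spec(R)$ (since $\Nil(R)$ is contained in every prime of $R$), I write $\bar{\p}:=\p/\Nil(R)$ for each prime $\p$ of $R$. The key point I need is the implication: if $\bar{\p}$ is a $q$-ideal of $R/\Nil(R)$, then $\p$ is a $q$-ideal of $R$. Equivalently (contrapositive), if $\p$ is semiregular in $R$ then $\bar{\p}$ is semiregular in $R/\Nil(R)$.

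To establish this key point, I would start with a finitely generated semiregular subideal $I=(a_1,\dots,a_n)\subseteq \p$ (so $(0:_R I)=0$) and set $\bar{I}:=(\bar{a}_1,\dots,\bar{a}_n)\subseteq \bar{\p}$. Given $\bar{r}$ annihilating $\bar{I}$, the lift $r$ satisfies $ra_i\in \Nil(R)$ for every $i$, so one can choose $k$ with $r^{k}a_i^{k}=0$ for all $i$. A pigeonhole argument applied to the power $I^{n(k-1)+1}$---every generating monomial $a_1^{e_1}\cdots a_n^{e_n}$ must have some exponent $e_j\geq k$---yields $r^{k}\cdot I^{n(k-1)+1}=0$. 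A short induction on $m$ shows that $(0:_R I)=0$ propagates to $(0:_R I^{m})=0$ for every $m\geq 1$; hence $r^{k}=0$ and so $r\in \Nil(R)$. This proves $\bar{I}$ has zero annihilator in $R/\Nil(R)$, witnessing the semiregularity of $\bar{\p}$.

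Once the key point is in hand, given any chain $\bar{\p}_0\subsetneq \cdots\subsetneq \bar{\p}_n$ of $q$-prime ideals in $R/\Nil(R)$ (with $\bar{\p}_n$ a maximal $q$-ideal), the lifted chain $\p_0\subsetneq \cdots\subsetneq \p_n$ consists entirely of $q$-primes of $R$ by the key point. Since $\p_n$ is a non-semiregular prime, it is contained in some maximal $q$-ideal $\M$ of $R$ (by the Zorn-type argument guaranteeing $q$-$\Max(R)\neq \emptyset$ recalled in the introduction), and consequently $q$-$\dim(R)\geq ht\,\M\geq ht\,\p_n\geq n$. Taking suprema over all such $n$ gives the desired inequality.

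The main obstacle is the annihilator estimate in the key point: from $\bar{r}\bar{I}=0$ one only obtains $rI\subseteq \Nil(R)$ rather than $rI=0$, so recovering exact annihilation after passing to the quotient requires both the pigeonhole trick on a suitable power of $I$ and the elementary observation that semiregularity of a finitely generated ideal persists for all its powers.
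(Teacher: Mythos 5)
Your proposal is correct and follows essentially the same route as the paper: both reduce the statement to showing that a semiregular prime $\p$ of $R$ has semiregular image $\p/\Nil(R)$, by passing to a finitely generated dense subideal and using a pigeonhole/nilpotency argument on a suitable power (your $r^{k}I^{n(k-1)+1}=0$ versus the paper's $J^{n}r^{n}=0$) together with the fact that powers of a dense finitely generated ideal remain dense. The only difference is that you spell out the bookkeeping the paper leaves implicit, which is fine.
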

\begin{proof} 
We only need to show that if $\p/\Nil(R)$ is a non-semiregular prime ideal of $R/\Nil(R)$, then  $\p$ is a non-semiregular prime ideal of $R$. On contrary, suppose $\p$ is a semiregular prime ideal of $R$. Then there is a finitely generated semiregular subideal $J$ of $\p$. We claim that $(J+\Nil(R))/\Nil(R)$ is a finitely generated semiregular subideal of $\p/\Nil(R)$. Indeed, suppose $(J+\Nil(R))/\Nil(R)\cdot(r+\Nil(R))=0+\Nil(R)$. Then $J r\subseteq \Nil(R)$. Since $J$ is finitely generated, $J^nr^n=0$ for some integer $n$. Note that $J^n$ is also semiregular, so $r\in \Nil(R)$, and thus the claim holds.
\end{proof}

\begin{proposition}\label{qd0}
	Let $R$ be a ring with $q$-$\dim(R)=0$. Then the following statements hold.
	\begin{enumerate}
		\item $R/\Nil(R)$ is a $\tau_q$-von Neumann regular ring.
		\item the minimal spectrum $\Min(R)$ is compact.
	\end{enumerate}
\end{proposition}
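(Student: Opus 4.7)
The plan is to derive both parts by combining Proposition \ref{nil} with the characterizations of $\tau_q$-von Neumann regular rings given in Theorem \ref{ncqvon} and \cite[Theorem 4.9]{ZQ23}, together with the well-known homeomorphism $\Spec(R)\cong \Spec(R/\Nil(R))$.

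For part (1), I would first observe that $R/\Nil(R)$ is reduced by construction. Next, applying Proposition \ref{nil} to the hypothesis $q$-$\dim(R)=0$, one obtains
\[
q\text{-}\dim(R/\Nil(R)) \leq q\text{-}\dim(R) = 0,
\]
and hence $q$-$\dim(R/\Nil(R))=0$ (as the $q$-Krull dimension is always non-negative: $q$-$\Max(R/\Nil(R))$ is non-empty by the remarks following the definition of $q$-ideals). Theorem \ref{ncqvon} then applies directly: a reduced ring of $q$-Krull dimension zero is $\tau_q$-von Neumann regular, which gives (1).

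For part (2), the key point is that the canonical projection $\pi\colon R\twoheadrightarrow R/\Nil(R)$ induces a homeomorphism $\Spec(R/\Nil(R))\xrightarrow{\sim}\Spec(R)$, since every prime of $R$ contains $\Nil(R)$. This homeomorphism restricts to a homeomorphism $\Min(R/\Nil(R))\cong \Min(R)$. By part (1), $R/\Nil(R)$ is $\tau_q$-von Neumann regular, so by the characterization recalled before Theorem \ref{ncqvon} (\cite[Theorem 4.9]{ZQ23}), its minimal spectrum $\Min(R/\Nil(R))$ is compact. Transporting this compactness across the homeomorphism yields compactness of $\Min(R)$.

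The argument is almost entirely a bookkeeping exercise once Proposition \ref{nil} and Theorem \ref{ncqvon} are in hand; I do not anticipate a genuine obstacle. The only subtlety worth double-checking is that $q$-$\dim(R/\Nil(R))$ is indeed well-defined and non-negative, i.e.\ that $q$-$\Max(R/\Nil(R))\neq\emptyset$, which is guaranteed by \cite[Proposition 2.5, Proposition 2.7]{ZDC20} applied to the reduced ring $R/\Nil(R)$ (and this automatically implies the existence of a $q$-ideal of height $0$, so the inequality forced by Proposition \ref{nil} is actually an equality).
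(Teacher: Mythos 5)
Your proposal is correct and fills in precisely the argument the paper leaves implicit: Proposition \ref{nil} forces $q$-$\dim(R/\Nil(R))=0$, Theorem \ref{ncqvon} gives (1), and the characterization of $\tau_q$-von Neumann regular rings via compact minimal spectrum together with the homeomorphism $\Min(R/\Nil(R))\cong\Min(R)$ gives (2). This is the same approach as the paper's (very terse) proof, just written out in full.
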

\begin{proof}
	The results follow by Theorem \ref{ncqvon} and Lemma \ref{nil}.
\end{proof}

Recall from \cite{H88} that a ring $R$ has \emph{property $\A$} if each finitely generated ideal $I\subseteq \Z(R)$ has a nonzero annihilator, or equivalently,  every finitely generated semiregular ideal of $R$ is regular.  Integral domains, Noetherian rings, nontrivial graded rings (e.g. polynomial rings), rings with Krull dimension equal to $0$ and  Kasch  rings (i.e., rings with only one dense ideal) are rings having property $\A$. By \cite[Corollary 2.6]{H88}, a ring $R$ is an $\A$-ring if and only if so is $\T(R)$. Recall from \cite[section 3]{G89} that the associated prime ideal of a ring $R$ is a prime ideal which is minimal over $(0:_Rr)$ for some $r\in R.$ It follows by \cite[Theorem 3.3.1]{G89} that the set of zero-divisors of a ring is  precisely the union of all   associated prime ideals. So we have the following corollary.

\begin{corollary}
	Let $R$ be a ring  having \emph{property $\A$}. Then 
	\begin{center}
	$q$-$\dim(R)=
\sup\{ht(\p)\mid \p\ \mbox{is an associated prime ideal of}\ R\}$.
	\end{center}
\end{corollary}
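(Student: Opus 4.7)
The plan is to interpret both sides of the equation in terms of primes contained in the zero-divisor set $\Z(R)$, by exploiting property $\A$.

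The first step is to establish that under property $\A$, a prime $\p$ of $R$ is a $q$-ideal if and only if $\p\subseteq \Z(R)$. By \cite[Proposition 2.7]{ZDC20}, a prime is a $q$-ideal precisely when it is non-semiregular, i.e., contains no finitely generated dense subideal. If some regular $r\in\p$ exists, then $(r)$ is such a subideal, so $\p$ fails to be a $q$-ideal; conversely, if $\p\subseteq \Z(R)$, every finitely generated subideal of $\p$ lies in $\Z(R)$ and hence, by property $\A$, has nonzero annihilator, so it cannot be dense. This gives
\[
q\text{-}\dim(R) \;=\; \sup\{\mathrm{ht}(\p) : \p\in\Spec(R),\ \p\subseteq \Z(R)\}.
\]
Combining this with the identity $\Z(R)=\bigcup_{\q\in\mathrm{Ass}(R)}\q$ cited from \cite[Theorem 3.3.1]{G89}, every associated prime is a $q$-prime, which immediately yields the easy inequality $\sup\{\mathrm{ht}(\q):\q\in\mathrm{Ass}(R)\}\leq q\text{-}\dim(R)$.

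The second, harder step is the reverse inequality. For this I would show that every prime $\p\subseteq\Z(R)$ is contained in some associated prime $\q$, which forces $\mathrm{ht}(\p)\leq\mathrm{ht}(\q)$. Given a finite list $x_1,\ldots,x_n\in\p$, property $\A$ supplies a nonzero $r$ with $(x_1,\ldots,x_n)\subseteq(0:_R r)$, and any minimal prime over $(0:_R r)$ is by definition associated. I would combine these finite-subset witnesses into a single associated prime containing all of $\p$, either by a Zorn's lemma argument on chains of primes inside $\Z(R)$ (arguing that a maximal such chain must terminate at an associated prime), or by localizing at $\p$ and showing that the unique maximal ideal of $R_\p$ is associated in $R_\p$, then contracting.

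The main obstacle is precisely this final gluing step. Since the decomposition $\Z(R)=\bigcup_{\q\in\mathrm{Ass}(R)}\q$ is potentially over an infinite index set, standard prime avoidance does not apply, and a prime $\p$ lying in the union need not a priori lie in a single member. Property $\A$ has to be exploited here in an essential way, to pass from control over each finitely generated subideal of $\p$ (where annihilators and hence associated primes are readily available) to control over $\p$ as a whole.
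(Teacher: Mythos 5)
Your first step is exactly the argument the paper itself relies on: under property $\A$ a prime is a $q$-ideal precisely when it is contained in $\Z(R)$, every associated prime consists of zero-divisors and hence is a prime $q$-ideal, and this gives the inequality $\sup\{ht(\q)\mid \q\in\mathrm{Ass}(R)\}\leq q$-$\dim(R)$ without difficulty. The problem is the reverse inequality, and here your proposal does not actually contain a proof: you state the needed claim (every prime $\p\subseteq \Z(R)$, or at least every maximal $q$-ideal, is contained in an associated prime, or is at least approximated in height by associated primes) and then only sketch two strategies, both of which stall exactly where you say they do. A Zorn argument on primes inside $\Z(R)$ only produces a prime maximal with respect to being contained in $\Z(R)$, i.e.\ a maximal $q$-ideal; nothing in that maximality makes it minimal over an annihilator $(0:_Ry)$, and that is precisely the statement to be proved, not a by-product of the construction. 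Localizing at $\p$ also fails as described: for $x\in\p$ property $\A$ supplies $y\neq 0$ with $xy=0$, but $y$ may be annihilated by elements outside $\p$, so $y/1$ can vanish in $R_\p$; thus $\p R_\p$ need not consist of zero-divisors of $R_\p$, and property $\A$ is not known to pass to $R_\p$. So the ``gluing step'' you flag is a genuine gap, not a routine verification.

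For comparison, here is how far the easy reductions go. Property $\A$ passes to $\T(R)$ (\cite[Corollary 2.6]{H88}), the primes contained in $\Z(R)$ are exactly the contractions of $\Spec(\T(R))$ with the same heights, and annihilators of elements localize, so the statement reduces to: for a total quotient ring with property $\A$, the Krull dimension equals the supremum of heights of primes minimal over ideals of the form $(0:_Ry)$. If a maximal $q$-ideal $\m$ is finitely generated the gap closes immediately: property $\A$ gives $y\neq 0$ with $\m y=0$, and $(0:_Ry)$ is a proper ideal contained in $\Z(R)$, hence non-semiregular, hence contained in a maximal $q$-ideal containing $\m$; maximality forces $\m=(0:_Ry)$, so $\m$ itself is an associated prime. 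Likewise the Noetherian case follows from prime avoidance, since $\mathrm{Ass}(R)$ is then finite. But in general property $\A$ only controls finitely generated subideals of $\m$, and the minimal primes over their annihilators need not have height anywhere near $ht(\m)$; your proposal offers no mechanism for passing from these finite witnesses to $\m$ itself. The paper states the corollary without supplying this step either, so you have correctly isolated the one point that genuinely requires an argument, but as submitted your proof is incomplete.
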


\begin{remark}\label{example} Note that the converse of Proposition \ref{qd0} is not true and the inequality in Proposition \ref{nil} can be taken strictly  in general. Let $R=k[[x,y_1,\dots,y_n]]/\langle x^2,xy_i\mid i=1,\dots,n\rangle$ where $k$ is a field and $n$ is a positive integer. Then $\dim(R)=n$ and the minimal  spectrum $\Min(R)$ is finite. Set $\m:=\langle \overline{x},\overline{y_i}\mid i=1,\dots,n\rangle$. Then $\m$ is an associated prime ideal of $R$ with $ht(\m)=n$. So $q$-$\dim(R)=n$. However, since $R/\Nil(R)$ is a reduced $\tau_q$-von Neumann regular ring, we have $q$-$\dim(R/\Nil(R))=0$.
\end{remark}

Recall from \cite[section 6.8]{fk16} that the $w$-Krull dimension $w$-$\dim(R)$ of a commutative ring $R$ is defined to be the supremum of the height of all maximal $w$-ideals. The $w$-Krull dimension of some integral domains are studied in \cite{fk16,w99,w01} . It is well-known that rings with weak $w$-global dimension $0$ are rings with weak
global dimension $0$; rings
with global weak $w$-projective dimension $0$ are rings with global
dimension $0$ (see \cite{wk14,wq19}). Note that both of them
are \DW\ rings. H. Kim proposed the following question in 2023 January 12-13 at ``the International E-Conference of Young Researchers in
Algebra and Number Theory'' held at Morocco:

\begin{center}
\textbf{Is every ring with $w$-Krull dimension $0$ a \DW\  ring?}
\end{center}
Note that if  an integral domain has $w$-Krull dimension $0$, then it is naturally a field by \cite[Theorem 7.2.12]{fk16}, and so is a \DW\  ring. Now we can show it is generally correct.

\begin{theorem}
Let $R$ be a ring with $w$-$\dim(R)=0$. Then  $\dim(R)=0$, and so $R$ is \DW\  ring.
\end{theorem}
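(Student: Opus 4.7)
The plan is to prove $\dim(R)=0$ first and then deduce that $R$ is a \DW\ ring via the property $\A$ machinery already mentioned in the excerpt.

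For the implication $\dim(R)=0 \Rightarrow R$ is \DW: the excerpt records that rings of Krull dimension $0$ have property $\A$, so every finitely generated dense (semi-regular) ideal $I$ of $R$ contains a non-zero-divisor. In a zero-dimensional ring every prime is both maximal and minimal, so every non-unit lies in some minimal prime and is therefore a zero-divisor; hence non-zero-divisors are exactly the units, which forces $I$ to contain a unit and to equal $R$. Since every $\GV$-ideal of $R$ is finitely generated and dense, this gives $\GV(R)=\{R\}$, which is precisely the \DW\ condition.

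For the implication $w$-$\dim(R)=0 \Rightarrow \dim(R)=0$, I would first establish the $w$-analogue of Proposition~\ref{nil}, namely $w$-$\dim(R) \geq w$-$\dim(R/\Nil(R))$. The argument given for Proposition~\ref{nil} transfers essentially verbatim, since it relies only on the fact that a power of a $\GV$-ideal is again a $\GV$-ideal (equivalently, that $\GV$-ideals are closed under finite products). Consequently $R/\Nil(R)$ is a reduced ring with $w$-$\dim=0$. It then suffices to prove a $w$-analogue of Theorem~\ref{ncqvon}: a reduced ring with $w$-$\dim=0$ is already a (classical) von Neumann regular ring. Applying this to $R/\Nil(R)$ gives $\dim(R/\Nil(R))=0$, and therefore $\dim(R)=\dim(R/\Nil(R))=0$.

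The main obstacle is the $w$-analogue of Theorem~\ref{ncqvon}. Unlike the $q$-case, the conclusion must be full von Neumann regularity, because $\GV$-ideals are strictly more restrictive than semi-regular ones (they additionally require $\Ext^1_R(R/J,R)=0$), so maximal $w$-ideals ought to exhaust the whole maximal spectrum in the zero-dimensional setting. Concretely, for each maximal ideal $\M$ of a reduced ring $R$ with $w$-$\dim(R)=0$ one must show $\M$ itself is a $w$-ideal; otherwise $\M$ contains a proper $\GV$-ideal $J$, and the plan is to use the finer $\GV$-conditions on $J$ together with reducedness — for instance by passing to a suitable minimal prime $\p_0\subseteq \M$ with $J\not\subseteq\p_0$ (which exists because $J$ is dense and so cannot be contained in $\Nil(R)=\bigcap\p_i$) and lifting the $\GV$-structure to the domain $R/\p_0$ via \cite[Theorem~7.2.12]{fk16} — to produce a nontrivial chain of prime $w$-ideals, contradicting $w$-$\dim(R)=0$. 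Executing this last step, ruling out proper $\GV$-ideals inside maximal ideals of a reduced ring of $w$-Krull dimension $0$, is the technical crux of the proof.
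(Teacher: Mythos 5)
Your proposal has a genuine gap at its stated ``technical crux,'' and the auxiliary reduction also does not transfer as claimed. First, the $w$-analogue of Proposition~\ref{nil} is not ``essentially verbatim'': the $q$-case only has to show that the image of a finitely generated \emph{dense} ideal stays dense modulo $\Nil(R)$, which the $J^nr^n=0$ trick handles. A $\GV$-ideal $J$ must in addition satisfy $\Ext^1_R(R/J,R)=0$, and nothing in that argument shows $\Ext^1_{R/\Nil(R)}\bigl((R/\Nil(R))/\bar J,\,R/\Nil(R)\bigr)=0$; $\GV$-ideals do not obviously pass to quotient rings, which is exactly the ``finer $\GV$-condition'' you yourself flag later. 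Second, and more seriously, the heart of the theorem --- that a (reduced) ring with $w$-$\dim(R)=0$ has $\dim(R)=0$ --- is not proved: you only outline a plan (``pass to a minimal prime $\p_0$ with $J\not\subseteq\p_0$ and lift the $\GV$-structure to $R/\p_0$''), and that plan again presupposes that the image of a $\GV$-ideal in $R/\p_0$ is a $\GV$-ideal of $R/\p_0$ and that prime $w$-ideals of $R/\p_0$ pull back to prime $w$-ideals of $R$, neither of which is established. So the proposal, as written, proves only the easy implication ($\dim(R)=0\Rightarrow R$ is \DW, which is fine) and defers the actual content.

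The missing idea, and the route the paper takes, is a depth--height argument rather than a reduction modulo $\Nil(R)$: if $\dim(R)\geq 1$, pick a prime $\p$ of height $1$; it cannot be a $w$-ideal (else $w$-$\dim(R)\geq 1$), so $\p$ contains a $\GV$-ideal $J=\langle x_1,\dots,x_n\rangle$. The defining vanishing $\Hom_R(R/J,R)=\Ext^1_R(R/J,R)=0$ says the $\E$-depth of $(J,R)$ is at least $2$; by \cite[Theorem 6.1.6]{S90} this equals the Koszul invariant $h^{-}(\textbf{x},R)$, and by \cite[Lemma 3.2]{AT09} one has $h^{-}(\textbf{x},R)\leq ht\langle\textbf{x}\rangle$. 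Hence $ht(J)\geq 2$, contradicting $J\subseteq\p$ with $ht(\p)=1$. In other words, the extra $\Ext^1$-vanishing of $\GV$-ideals is exploited quantitatively (depth $\geq 2$ forces height $\geq 2$), which is precisely the mechanism your sketch gestures at but never supplies.
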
 
\begin{proof} On contrary assume that $\dim(R)\geq 1$. Then there is a prime ideal $\p$ of $R$ with $ht\p=1$. Since $w$-$\dim(R)=0$, $\p$ is not a $w$-ideal. And so $\p_w=R$ by \cite[Theorem 6.2.9]{fk16}. Then there is a $\GV$-ideal $J:=\langle x_1,\dots,x_n\rangle$ of $R$ such that $J\subseteq \p$. Note that $\E$-\dep $(J,R):=\inf\{n\mid\Ext_R^n(R/J,R)\not=0\}\geq 2$. Write the sequence $\textbf{x}=\{x_1,\dots,x_n\}$. Set $h^{-}(\textbf{x},R):=\inf\{n\mid\H^n(\textbf{x},R)\not=0\}$, where $\H^n(\textbf{x},R)$ is $n$-th homology of complex $\Hom_R(K_\bullet(\textbf{x}),R)$ with $K_\bullet(\textbf{x})$ the Koszul complex.  It follows by \cite[Theorem 6.1.6]{S90} that $h^{-}(\textbf{x},R)=\E$-\dep $(J,R)\geq 2.$ It follows by \cite[Lemma 3.2]{AT09} that $h^{-}(\textbf{x},R)\leq ht(\langle \textbf{x}\rangle)$,  height of the  ideal generated by $\textbf{x}$. So $ht(J)\geq 2$, and hence $ht(\p)\geq 2$, which is a contradiction.
\end{proof}

\section{$q$-dimension of $\tau_q$-Noetherian rings}
Let $R$ be a ring. We denote by $R[x]$ the polynomial ring over $R$ with one variable. Let $f\in R[x]$, we write $c(f)$ the content of $f$, that is, the ideal generated by the  coefficients of $f$. Note that $f$ is a non-zero-divisor in $R[x]$ if and only if $c(f)$ is a semiregular ideal of $R$. In this section, we always write $q$ for the $q$-operation of $R$ and $Q$ for  the $q$-operation of $R[x]$.

\begin{lemma}\label{DMa}
Let $M$ be a $\Q$-torsion-free $R$-module, $g$ a non-zero-divisor in $R[x]$, and $f\in M[x]$. Then $c(gf)_q=c(f)_q$.
\end{lemma}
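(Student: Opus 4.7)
The plan is to reduce the equality $c(gf)_q=c(f)_q$ to the classical Dedekind--Mertens identity together with the fact that the content of a regular polynomial in $R[x]$ belongs to $\Q$.

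First I would observe that $g$ being a non-zero-divisor in $R[x]$ is equivalent (by the standard McCoy-type criterion) to $(0:_R c(g))=0$; since $c(g)$ is finitely generated by the coefficients of $g$, this places $c(g)$ in $\Q$. A one-line check, using that products of dense ideals remain dense, shows $\Q$ is closed under finite products, so $c(g)^{n}\in\Q$ for every $n\geq 1$.

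Next I would invoke the module-valued Dedekind--Mertens lemma: there is an integer $m\geq 0$ (one may take $m=\deg f$) such that
\begin{equation*}
c(g)^{m+1}\,c(f)=c(g)^{m}\,c(gf)
\end{equation*}
as $R$-submodules of $M$. With this identity in hand, the rest is bookkeeping with the definition of the Lucas envelope. The inclusion $c(gf)_q\subseteq c(f)_q$ follows from the containment $c(gf)\subseteq c(g)c(f)\subseteq c(f)$ (the second inclusion uses that $c(f)$ is an $R$-submodule of $M$) together with monotonicity of $(-)_q$. For the reverse, Dedekind--Mertens gives $c(g)^{m+1}c(f)\subseteq c(gf)$, so every $u\in c(f)$ satisfies $Iu\subseteq c(gf)$ with $I:=c(g)^{m+1}\in\Q$; by the definition of $N_q$ this forces $u\in c(gf)_q$. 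Hence $c(f)\subseteq c(gf)_q$, and applying $(-)_q$ together with the idempotence $(c(gf)_q)_q=c(gf)_q$ on Lucas modules yields $c(f)_q\subseteq c(gf)_q$.

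The only step that is not pure bookkeeping is the invocation of the module form of Dedekind--Mertens for $f\in M[x]$ rather than $f\in R[x]$; this is classical in the content-module literature and just needs to be cited. Beyond that, everything follows from the definition of the Lucas envelope and the closure of $\Q$ under products recalled in Section~1, so I do not anticipate any further genuine obstacle.
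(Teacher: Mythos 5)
Your proof is correct and takes essentially the same route as the paper: both rest on the module form of the Dedekind--Mertens formula together with the observation that powers of $c(g)$ are finitely generated semiregular (dense) ideals, hence lie in $\Q$. The only difference is bookkeeping --- the paper concludes via the identity $(IJ)_q=(I_qJ_q)_q$ and $(c(g)^k)_q=R_q$, while you argue directly from the definition of the Lucas envelope; both are routine.
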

\begin{proof}
Since $g$ is a non-zero-divisor in $R[x]$, so is $g^k$, and thus $(c(g)^{k})_q=R_q,$ for any positive integer $k$.	By Dedekind-Mertens formula (see \cite[Theorem 1.7.16]{fk16}), there is a positive integer $k$ such that $c(g)^{k+1}c(f)=c(g)^{k}c(gf)$. Hence $(c(g)^{k+1}c(f))_q=(c(g)^{k}c(gf))_q=((c(g)^{k})_q(c(gf))_q)_q=(R_q(c(gf))_q)_q=((c(gf))_q)_q=(c(gf))_q.$
\end{proof}

\begin{lemma}\label{sing}
	Let $\I$ be an ideal of $R[x]$. If $R\subseteq c(\I)_q$, then there is a $g\in \I$ such that $R\subseteq c(g)_q$.
\end{lemma}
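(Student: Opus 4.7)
The plan is to unpack the hypothesis $R \subseteq c(\I)_q$ via the concrete description of the Lucas envelope and then exhibit the desired $g$ through a staggering trick. Since $1 \in c(\I)_q$, there is a finitely generated semi-regular ideal $J = \langle a_1,\dots,a_m\rangle \in \Q$ with $J \subseteq c(\I)$. Because $c(\I) = \sum_{f\in\I} c(f)$ and $J$ is finitely generated, only finitely many polynomials are needed to capture $J$: choose $f_1,\dots,f_n \in \I$ such that
\[
J \;\subseteq\; c(f_1) + c(f_2) + \cdots + c(f_n).
\]

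The key step is to collapse this finite sum of contents into the content of a single element of $\I$. I would form
\[
g \;=\; f_1 + x^{d_1} f_2 + x^{d_2} f_3 + \cdots + x^{d_{n-1}} f_n,
\]
where the exponents $d_1 < d_2 < \cdots < d_{n-1}$ are chosen so large that the nonzero monomial terms of consecutive summands do not overlap in degree (it suffices to pick each $d_i$ strictly greater than $d_{i-1} + \deg f_i$). With this separation, the coefficients of $g$ are precisely the disjoint union of the coefficients of the $f_i$, so $c(g) = c(f_1) + \cdots + c(f_n)$. Since $\I$ is an ideal of $R[x]$, each $x^{d_i} f_{i+1}$ lies in $\I$, whence $g \in \I$.

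Combining these two steps, $J \subseteq c(g)$ with $J \in \Q$, so $c(g)$ contains a finitely generated semi-regular subideal. This gives $1 \in c(g)_q$ (witnessed by the same $J$), and then $r \cdot J \subseteq c(g)$ for every $r \in R$ yields $R \subseteq c(g)_q$, as required.

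The only delicate point is checking that the staggering genuinely produces $c(g) = \sum_i c(f_i)$: the inclusion $c(g) \subseteq \sum_i c(f_i)$ is immediate, while the reverse inclusion relies on the chosen degree gaps preventing any cancellation between the summands. Once this purely combinatorial bookkeeping is in place, the argument is essentially a direct translation of $R \subseteq c(\I)_q$ through the defining property of the Lucas envelope.
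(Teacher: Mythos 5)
Your argument is correct and essentially the paper's own proof: the paper likewise uses the hypothesis to place a finitely generated semiregular ideal inside $c(\I)$ (phrased as $c(\I)_q=R_q$, so $c(\I)$ is $\tau_q$-finitely generated), captures it with finitely many elements $g_1,\dots,g_n\in\I$, and then collapses $c(g_1)+\cdots+c(g_n)$ into the content of a single $g\in\I$ by citing \cite[Proposition 1.7.15]{fk16}, which is exactly the degree-staggering construction you carry out explicitly. The only difference is that you prove that cited content-collapsing step inline rather than quoting it.
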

\begin{proof}
	Since $c(\I)\subseteq R\subseteq c(\I)_q$, we have $R_q=c(\I)_q$, and thus $c(\I)$ is  $\tau_q$-finitely generated. So there is $g_1,\dots,g_n\in \I$ such that $c(g_1,\dots,g_n)_q=c(\I)_q$. By \cite[Proposition 1.7.15]{fk16}, there exists $g\in \I$ such that $R\subseteq c(\I)_q=c(g_1,\dots,g_n)_q=c(g)_q.$
\end{proof}

\begin{proposition}\label{poly}
	Let $F$ be a Lucas $R$-module, $F[x]$ a Lucas $R[x]$-module and $M$ a submodule of $F$. Then the following statements hold.
	\begin{enumerate}
		\item If $M$ is a Lucas $R$-module, then $M[x]$ is a Lucas $R[x]$-module.
		\item $M[x]_Q=M_q[x]$.
	\end{enumerate}	
\end{proposition}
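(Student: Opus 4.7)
The plan is first to observe that part (1) is a direct consequence of part (2). Since $M[x]$ is a submodule of the Lucas $R[x]$-module $F[x]$, it is automatically $\Q_{R[x]}$-torsion-free. If we additionally assume $M$ is Lucas over $R$, then $M_q = M$, and part (2) gives $M[x]_Q = M_q[x] = M[x]$, so $M[x]$ is a Lucas $R[x]$-module. I therefore focus on proving the identity $M[x]_Q = M_q[x]$ in (2).

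For the inclusion $M_q[x] \subseteq M[x]_Q$, take $f = \sum_{i=0}^{n} m_i x^i$ with each $m_i \in M_q$. Choose $I_i \in \Q$ with $I_i m_i \subseteq M$ and set $I := I_0 I_1 \cdots I_n$. Then $I$ is finitely generated and dense (as a product of finitely generated dense ideals), so $I \in \Q$, and its extension $I R[x]$ is a finitely generated dense ideal of $R[x]$; hence $I R[x] \in \Q_{R[x]}$. Since $I m_i \subseteq M$ for every $i$, one has $(I R[x]) f \subseteq M[x]$. As $m_i \in M_q \subseteq F_q = F$, the polynomial $f$ lives in $F[x]$, and therefore $f \in M[x]_Q$.

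For the reverse inclusion, take $f \in M[x]_Q$. Because $F[x]$ is Lucas and $M[x] \subseteq F[x]$, we have $M[x]_Q \subseteq F[x]_Q = F[x]$, so $f = \sum_{i=0}^{n} a_i x^i$ with $a_i \in F$. Pick $\I \in \Q_{R[x]}$ with $\I f \subseteq M[x]$. The content $c(\I)$ is finitely generated, and any $r \in R$ annihilating $c(\I)$ also annihilates $\I$, hence vanishes; thus $c(\I)$ is semi-regular in $R$, i.e., $c(\I) \in \Q$, so $R \subseteq c(\I)_q$. Lemma \ref{sing} then yields some $g \in \I$ with $R \subseteq c(g)_q$, which means $c(g)$ is semi-regular, so $g$ is a non-zero-divisor in $R[x]$. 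From $gf \in M[x]$ one gets $c(gf) \subseteq M$, and applying Lemma \ref{DMa} to the $\Q$-torsion-free ambient module $F$ gives $c(f)_q = c(gf)_q \subseteq M_q$. Hence $c(f) \subseteq M_q$, so each $a_i$ lies in $M_q$ and $f \in M_q[x]$.

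The main obstacle is the manufacture of a single non-zero-divisor $g \in R[x]$ from the finitely generated semi-regular ideal $\I \subseteq R[x]$; this is precisely the content of Lemma \ref{sing}, once one verifies that $c(\I) \in \Q$. Given such a $g$, the Dedekind--Mertens-style equality $c(gf)_q = c(f)_q$ from Lemma \ref{DMa} propagates the coefficient containment from $gf$ back to $f$ modulo the $q$-operation, which is the mechanism making the coefficient-wise analysis go through and delivering both (2) and (1).
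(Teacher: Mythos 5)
Your proof is correct and follows essentially the same route as the paper: produce a non-zero-divisor $g\in \I$ from the finitely generated semiregular ideal $\I$ of $R[x]$ (via the content argument and Lemma \ref{sing}) and then use the Dedekind--Mertens identity of Lemma \ref{DMa} to transfer $c(gf)\subseteq M$ back to $c(f)_q\subseteq M_q$. The only difference is organizational: you prove (2) directly for an arbitrary submodule $M$ and deduce (1), whereas the paper proves (1) for Lucas $M$ first and then obtains (2) by applying (1) to $M_q$ together with the easy inclusion $M_q\subseteq M[x]_Q$; the underlying mechanism is identical.
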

\begin{proof}
(1)	Let $\I$ be a (semi)regular ideal of $R[x]$ and $f\in F[x]$ such that $\I f\subseteq M[x].$ Then there exists $g\in \I$ such that $c(g)$ is a finitely generated semiregular ideal of $R$. Since $gf\in M[x]$. Then $c(f)\subseteq c(f)_q=c(gf)_q\subseteq M$ by Lemma \ref{DMa}. And thus $f\in M[x]$. Therefore  $M[x]$ is a Lucas $R[x]$-module.
	
(2) By (1), we have  $M[x]_Q\subseteq M_q[x]$. For any $u\in M_q$, there is  semiregular ideal $\J$ such that $\J u\subseteq M$. And so $\J[x]u\subseteq M[x]$. Note that $\J[x]$ is a regular ideal of $R[x]$. So $u\in (M[x])_Q$. Therefore $M[x]_Q=M_q[x]$.
\end{proof}

\begin{proposition}\label{max}
 	Let $\M$ be a prime ideal of $R[x]$ and set $\p=\M\cap R$. Then $\M$ is a maximal $q$-ideal of $R[x]$ if and only if $\M=\p[x]$ and $\p$ is a maximal $q$-ideal of $R$.
\end{proposition}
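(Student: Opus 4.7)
The plan is to exploit the correspondence between an ideal of $R[x]$ being non-semi-regular and its content being non-semi-regular in $R$, together with the tautological containment $\I \subseteq c(\I)[x]$ and the identity $c(c(\I)[x]) = c(\I)$.

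The key preliminary I would establish is: for any ideal $\I$ of $R[x]$, $\I$ is semi-regular in $R[x]$ if and only if $c(\I)$ is semi-regular in $R$. The forward direction is elementary --- if $J = (f_1,\dots,f_n) \subseteq \I$ is finitely generated and dense, then any $r \in R$ with $r\cdot c(J)=0$ satisfies $rf_i=0$ for all $i$, forcing $r=0$, so $c(J) \subseteq c(\I)$ is a finitely generated dense subideal. The reverse direction is where Lemma \ref{sing} intervenes: the condition ``$c(\I)$ semi-regular'' is equivalent to $R \subseteq c(\I)_q$, so Lemma \ref{sing} yields a single $g \in \I$ with $R \subseteq c(g)_q$, meaning $c(g)$ is semi-regular; since $c(g)$ is finitely generated it is then dense, and by the McCoy-type fact recalled just before Lemma \ref{DMa}, $g$ is a non-zero-divisor in $R[x]$, so $(g) \subseteq \I$ is a finitely generated regular subideal. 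This reverse direction is the main obstacle --- a direct Dedekind--Mertens attempt only produces that some power of $c(g)$ annihilates $c(\I)$, which is insufficient to conclude $g=0$ in non-reduced rings, so Lemma \ref{sing} is essential.

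For the forward implication of the proposition, let $\M$ be a maximal $q$-ideal of $R[x]$, equivalently a maximal non-semi-regular ideal. Then $\M \subseteq c(\M)[x]$ automatically, and by the correspondence $c(\M)[x]$ is non-semi-regular; maximality forces $\M = c(\M)[x]$, whence $\p := \M \cap R = c(\M)$ and $\M = \p[x]$. That $\p$ is itself a maximal $q$-ideal follows since $\p$ is prime (as $\p[x]$ is), non-semi-regular (as $\p = c(\M)$), and any strictly larger non-semi-regular ideal $I \supsetneq \p$ would give a non-semi-regular $I[x] \supsetneq \p[x] = \M$, contradicting maximality of $\M$. For the reverse implication, $\p$ maximal $q$-ideal makes $\p[x]$ prime and non-semi-regular (its content is $\p$); any non-semi-regular $K \supseteq \p[x]$ then has non-semi-regular content $c(K) \supseteq \p$, so maximality of $\p$ forces $c(K) = \p$, giving $K \subseteq c(K)[x] = \p[x]$ and hence $K = \p[x]$.
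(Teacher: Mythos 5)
Your proof is correct, and it reaches the statement by a genuinely different organization than the paper's. The paper argues through Lucas envelopes: for necessity it shows that $\M\subsetneq c(\M)[x]$ would force $R[x]\subseteq (c(\M)[x])_Q$, pulls this back to $c(\M)_q\cap R=R$ using the identity $M[x]_Q=M_q[x]$ from the preceding proposition, and then invokes Lemma \ref{sing} to produce $g\in\M$ with $c(g)$ finitely generated semiregular, contradicting that $\M$ is a $q$-ideal; for sufficiency it takes a maximal $q$-ideal $\Qq$ containing $\p[x]$ and applies the necessity to $\Qq$. You instead isolate a clean content correspondence --- an ideal $\I$ of $R[x]$ is semiregular if and only if $c(\I)$ is semiregular in $R$ --- whose nontrivial half is exactly where Lemma \ref{sing} and the McCoy-type fact enter (the same crux as in the paper's necessity), and then both implications of the proposition become direct maximality arguments inside the family of non-semi-regular ideals, via the identification of maximal $q$-ideals with maximal non-semi-regular ideals recalled in the introduction. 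Your route buys two things: it bypasses the Lucas-envelope proposition entirely, and it makes explicit a point the paper's sufficiency leaves implicit, namely that $\p[x]$ is itself non-semi-regular (hence a $q$-ideal), which is needed before one can even speak of a maximal $q$-ideal containing it. The paper's route, in exchange, runs through the stronger and independently useful identity $M[x]_Q=M_q[x]$. One cosmetic point: in your reverse half of the correspondence, $c(g)$ is dense simply because it is semiregular (every semiregular ideal has zero annihilator); finite generation is not what yields density, though your conclusion via McCoy that $g$ is a non-zero-divisor is unaffected.
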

\begin{proof}
	Assume that $\M$ is a maximal $q$-ideal of $R[x]$. Claim that $\M=c(\M)[x]$, and so $c(\M)=\M\cap R=\p$. Clearly, $\M\subseteq c(\M)[x]$. On the other hand, if $\M\subsetneq c(\M)[x]$, then $R[x]\subseteq(c(\M)[x])_Q$. So, by Proposition \ref{poly},  $(c(\M)_q\cap R)[x]=(c(\M)[x])_Q\cap R[x]=R[x].$ Thus we have $c(\M)_q\cap R=R.$ By Lemma \ref{sing} there exists $g\in \M$ such that $c(g)$ is a finitely generated semiregular ideal of $R$. So $\M$ is not a $q$-ideal of $R[x]$ which is a contradiction. And so $\M=c(\M)[x]=\p[x]$. By Proposition \ref{poly}, $\p$ is a maximal $q$-ideal of $R$.
	
	Suppose $\M=\p[x]$ where $\p$ is a maximal $q$-ideal of $R$. Let $\Qq$ be a maximal $q$-ideal of $R[x]$ that contains $\M$. Write $\q=\Qq\cap R$. Then $\p\subseteq\q$. By the proof of necessary, we have $\Qq=\q[x]$, where $\q$ is a maximal $q$-ideal of $R$. So $\p=\q$, and thus $\Qq=\M$. Consequently, $\M$ is a maximal $q$-ideal of $R[x]$. 
\end{proof}

\begin{theorem}\label{poly}
Let $R$ be a ring. Then $q$-$\dim(R)\leq q$-$\dim(R[x])\leq 2q$-$\dim(R)$.
\end{theorem}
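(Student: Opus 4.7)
The approach for both inequalities rests on Proposition \ref{max}, which identifies the maximal $q$-ideals of $R[x]$ as exactly the extended ideals $\p[x]$ for $\p\in q$-$\Max(R)$, together with the standing fact that any prime contained in a prime $q$-ideal is itself a prime $q$-ideal.

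For the lower bound, the plan is to lift chains. Given a chain of prime $q$-ideals $\p_0\subsetneq\p_1\subsetneq\cdots\subsetneq\p_n$ in $R$, choose a maximal $q$-ideal $\m$ of $R$ containing $\p_n$. Each extension $\p_i[x]$ is prime in $R[x]$ (because $R[x]/\p_i[x]\cong(R/\p_i)[x]$ is a domain), and the resulting chain sits inside $\m[x]$, which is a maximal $q$-ideal of $R[x]$ by Proposition \ref{max}. Hence every $\p_i[x]$ is a prime $q$-ideal of $R[x]$, witnessing $q$-$\dim(R[x])\geq n$.

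For the upper bound, it suffices to show $ht(\p[x])\leq 2\,ht(\p)$ for each $\p\in q$-$\Max(R)$ (again by Proposition \ref{max}, since every maximal $q$-ideal of $R[x]$ has this form). Take a chain $\Pp_0\subsetneq\Pp_1\subsetneq\cdots\subsetneq\Pp_m=\p[x]$ of primes in $R[x]$ and contract to obtain a chain $\p_0\subseteq\p_1\subseteq\cdots\subseteq\p_m=\p$ in $R$, whose distinct values $\p'_0\subsetneq\cdots\subsetneq\p'_{k-1}=\p$ form a chain of prime $q$-ideals of length $k-1\leq ht(\p)$. The counting input is Seidenberg's classical result that the fiber of $\Spec(R[x])\to\Spec(R)$ over any prime of $R$ has Krull dimension at most $1$, together with its standard corollary: if two strictly nested primes $\Pp\subsetneq\Pp'$ of $R[x]$ share the contraction $\p'$, then $\Pp$ must equal the extended prime $\p'[x]$.

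The delicate point --- and what produces the sharp constant $2$ in place of the $2+1$ one would get from Seidenberg alone --- is that the top contraction $\p'_{k-1}=\p$ can be realised by only \emph{one} index: if both $\Pp_{m-1}$ and $\Pp_m$ contracted to $\p$, the Seidenberg corollary would force $\Pp_{m-1}=\p[x]=\Pp_m$, contradicting strict inclusion. The remaining $k-1$ contraction values are each realised by at most two consecutive $\Pp_i$'s, so $m+1\leq 2(k-1)+1$, giving $m\leq 2(k-1)\leq 2\,ht(\p)\leq 2q\text{-}\dim(R)$. The main technical input is the Seidenberg fiber bound, a classical fact about polynomial extensions; the rest is a pigeonhole-style bookkeeping of contraction values anchored by Proposition \ref{max}.
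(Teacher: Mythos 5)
Your proof is correct and follows essentially the same route as the paper: the lower bound by extending a chain of prime $q$-ideals of $R$ under a maximal $q$-ideal $\m$ to the chain of extended ideals inside $\m[x]$ via Proposition \ref{max}, and the upper bound by contracting a chain lying under a maximal $q$-ideal $\M=\p[x]$ of $R[x]$ and counting contractions using the classical Seidenberg fact that no three primes in a chain of $R[x]$ share the same contraction. Your explicit remark that the top contraction $\p$ is realized only once (since $\M$ is itself the extended ideal $\p[x]$, a second prime below it with contraction $\p$ would have to equal $\p[x]$) is precisely what sharpens the generic bound $2n+1$ to $2n$; the paper leaves this point implicit in its appeal to \cite[Theorem 1.8.15]{fk16}.
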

\begin{proof} Assume $q$-$\dim(R)=n$. Let $\p_n\subsetneq \p_{n-1}\subsetneq\cdots\subsetneq \p_1\subsetneq \p_0=\m$ be a chain of prime $q$-ideals of $R$ with $\m$ a maximal $q$-ideal of $R$. Then $$\p_n[x]\subsetneq \p_{n-1}[x]\subsetneq\cdots\subsetneq \p_1[x]\subsetneq \p_0[x]=\m[x]$$ is a chain of prime  $q$-ideal of $R[x]$ by Proposition \ref{max}. And so $q$-$\dim(R)\leq q$-$\dim(R[x])$.
	
On the other hand, assume $q$-$\dim(R[x])=m$.  Let $\Pp_m\subsetneq \Pp_{m-1}\subsetneq\cdots\subsetneq \Pp_1\subsetneq \Pp_0=\M$ be a chain of prime  $q$-ideal of $R[x]$. Then $$\Pp_m\cap R\subseteq \Pp_{m-1}\cap R\subseteq\cdots\subseteq \Pp_1\cap R\subseteq \Pp_0\cap R=\M\cap R\ \ \ \ \ \ \ \ \ \ \ \  (\star)$$ is a chain of prime  $q$-ideal of $R$  by Proposition \ref{max}. It follows by \cite[Theorem 1.8.15]{fk16} that  different terms in the $(\star)$ are at least $\frac{m}{2}$. So  $\frac{m}{2}\geq n$. Thus $q$-$\dim(R[x])\leq 2q$-$\dim(R)$.	
\end{proof}

Recall from \cite{ZDC20} that a ring $R$ is called to be a \emph{$\tau_q$-Noetherian ring}  if it satisfies the ACC on its $q$-ideals. Following from \cite{zq-ps,ZDC20} that a ring $R$ is a $\tau_q$-Noetherian ring if and only if every ($q$-)ideal is $\tau_q$-finitely generated if and only if $T(R[x])$ is a Noetherian ring if and only if $T(R)$ is a Noetherian ring.

\begin{lemma}\label{N-loc}
Let $R$ be a $\tau_q$-Noetherian ring and $\p$ a prime $q$-ideal of $R$. Then $R_\p$ is a Noetherian ring.
\end{lemma}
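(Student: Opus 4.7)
The plan is to reduce the localization $R_\p$ to a localization of the total quotient ring $T(R)$ and then invoke the characterization of $\tau_q$-Noetherian rings recalled just before the lemma: $R$ is $\tau_q$-Noetherian iff $T(R)$ is Noetherian.

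First I would observe that a prime $q$-ideal $\p$ must consist entirely of zero-divisors. Indeed, by the discussion in Section~1, a prime ideal is a $q$-ideal precisely when it is non-semi\-regular. If $\p$ contained a non-zero-divisor $r$, then the principal ideal $\langle r\rangle\subseteq\p$ would be finitely generated and dense (since $(0:_R r)=0$), so $\p$ would be semi\-regular, a contradiction. Hence $\p\subseteq\Z(R)$.

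Next, the inclusion $R\setminus\Z(R)\subseteq R\setminus\p$ implies that every non-zero-divisor of $R$ becomes a unit in $R_\p$, so the canonical map $R\to R_\p$ factors uniquely through $T(R)$. Writing $S=R\setminus\p$ and $U=R\setminus\Z(R)$, the identification
\[
R_\p \;=\; S^{-1}R \;=\; S^{-1}(U^{-1}R) \;=\; \overline{S}^{\,-1}T(R),
\]
where $\overline S$ denotes the image of $S$ in $T(R)$, exhibits $R_\p$ as a localization of $T(R)$.

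Finally, since $R$ is $\tau_q$-Noetherian, $T(R)$ is Noetherian by the characterization quoted from \cite{zq-ps,ZDC20}, and localizations of Noetherian rings are Noetherian, giving the result. There is essentially no obstacle here; the only subtlety is ensuring that $\p\subseteq\Z(R)$, and this is immediate from the ``non-semi\-regular'' characterization of prime $q$-ideals. The reason we need not localize more carefully is that, having already inverted all non-zero-divisors to form $T(R)$, any further localization inside $T(R)$ is automatic.
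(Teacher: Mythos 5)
Your argument is correct, but it runs along a different line than the paper's. You first observe that a prime $q$-ideal is non-semiregular and hence contained in $\Z(R)$ (any non-zero-divisor in $\p$ would give a finitely generated dense subideal), so every non-zero-divisor of $R$ is inverted in $R_\p$ and, by transitivity of localization, $R_\p$ is a localization of $\T(R)$; you then quote the equivalence ``$R$ is $\tau_q$-Noetherian iff $\T(R)$ is Noetherian'' stated before the lemma and conclude since localizations of Noetherian rings are Noetherian. All of these steps check out. The paper instead argues at the level of ideals: given an ideal $I_\p$ of $R_\p$, the $\tau_q$-Noetherian hypothesis (in the form ``every ideal is $\tau_q$-finitely generated'') gives a finitely generated $J\subseteq I$ with $J_q=I_q$, and since no semiregular ideal is contained in the prime $q$-ideal $\p$, localizing at $\p$ collapses the $q$-closure, so $I_\p=J_\p$ is finitely generated. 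The trade-off: your route is shorter and yields the stronger structural fact that $R_\p$ is literally a localization of the Noetherian ring $\T(R)$, but it leans entirely on the strongest clause of the quoted characterization (the $\T(R)$-Noetherian equivalence imported from \cite{zq-ps,ZDC20}); the paper's proof is more self-contained, using only the basic ``$\tau_q$-finitely generated'' formulation together with the elementary observation that semiregular ideals survive localization at a prime $q$-ideal, and that style of argument transfers directly to other star-operation settings.
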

\begin{proof} Let $I_\p$ be an ideal of  $R_\p$ with $I$ an ideal of $R$.  Since  $R$ is a $\tau_q$-Noetherian ring, $I$ is $\tau_q$-finitely generated. So there exits a finitely generated ideal $J$ of $R$ such that $I_q=J_q$. And so $I_\p=J_\p$ since $\p$ is a prime $q$-ideal of $R$. Hence  $I_\p$ is finitely generated. It follows that $R_\p$ is a Noetherian ring.
\end{proof}
\begin{remark}
	The converse of Lemma \ref{N-loc} is not true in general. In fact, let $R$ be a von Neumann regular ring not semisimple. Then $R$ is a $\DQ$-ring, and so every prime ideal  is a $q$-ideal. Note that $R_\p$ is a field for every prime ideal. But $R$ is not a Noetherian ring, so is also not $\tau_q$-Noetherian.
\end{remark}

\begin{proposition}\label{Noepol}
Let $R$ be a $\tau_q$-Noetherian ring and $\Pp$ a prime $q$-ideal of $R[x]$. If $(\Pp\cap R)[x]\not=\Pp$, then $ht\Pp=ht(\Pp\cap R)+1$. Hence $ht\Pp=ht(\Pp\cap R)$ if and only if $\Pp=(\Pp\cap R)[x]$.
\end{proposition}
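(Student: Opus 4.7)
The plan is to reduce the statement to the classical height formula for polynomial rings over a Noetherian base, by localizing at $\p:=\Pp\cap R$. Since $\Pp$ is a prime $q$-ideal of $R[x]$, every prime of $R[x]$ contained in $\Pp$ is again a $q$-ideal, and in particular $\p$ is a prime $q$-ideal of $R$. Lemma~\ref{N-loc} then guarantees that $R_\p$ is a Noetherian ring, and therefore $R_\p[x]$ is a Noetherian ring as well.

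Next, any prime ideal $\mathfrak{q}\subseteq\Pp$ of $R[x]$ satisfies $\mathfrak{q}\cap R\subseteq\p$, so $\mathfrak{q}$ is disjoint from the multiplicative subset $R\setminus\p$. Extension--contraction along $R[x]\to R_\p[x]$ therefore yields an inclusion-preserving bijection between the primes of $R[x]$ contained in $\Pp$ and the primes of $R_\p[x]$ contained in $\Pp R_\p[x]$. In particular, the height of $\Pp$ in $R[x]$ equals the height of $\Pp R_\p[x]$ in $R_\p[x]$, and $ht(\p)=\dim R_\p$. Moreover, $(\Pp\cap R)[x]\neq \Pp$ extends to $\p R_\p[x]\neq \Pp R_\p[x]$, and dually $\Pp=(\Pp\cap R)[x]$ extends to $\Pp R_\p[x]=\p R_\p[x]$.

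Inside the Noetherian ring $R_\p[x]$ I would then invoke the classical height formula (in the spirit of \cite[Theorem 1.8.15]{fk16}): for a prime $\Qq$ of $A[x]$ lying over a prime $\mathfrak{a}$ of a Noetherian ring $A$, one has $ht(\Qq)=ht(\mathfrak{a})$ when $\Qq=\mathfrak{a}[x]$, and $ht(\Qq)=ht(\mathfrak{a})+1$ otherwise. Applied to $\Pp R_\p[x]$ over $\p R_\p$, this yields $ht(\Pp)=ht(\p)+1=ht(\Pp\cap R)+1$ under the assumption $(\Pp\cap R)[x]\neq\Pp$, proving the first assertion. The ``hence'' clause is then immediate: if $\Pp=(\Pp\cap R)[x]$, the same formula shows $ht(\Pp)=ht(\p)=ht(\Pp\cap R)$; conversely, $ht(\Pp)=ht(\Pp\cap R)$ rules out the case $(\Pp\cap R)[x]\neq\Pp$ by the main assertion, forcing $\Pp=(\Pp\cap R)[x]$.

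The main obstacle I anticipate is not any new dimension-theoretic input but the careful justification that localization at $\p$ preserves both the chain structure below $\Pp$ and the ``polynomial-ideal form'' $\Pp=\p[x]$. Once this reduction is in place, the Noetherian polynomial-ring theory supplies the conclusion.
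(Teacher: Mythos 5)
Your proposal is correct and follows essentially the same route as the paper: localize at $\p=\Pp\cap R$, use Lemma~\ref{N-loc} to get that $R_\p$ is Noetherian, and apply the classical height formula for primes of $A[x]$ over a Noetherian ring $A$ (the paper cites \cite[Theorem 4.4.7]{fk16} for this), with your extension--contraction bijection making explicit what the paper leaves implicit. The only small point worth tightening is your justification that $\p$ is a prime $q$-ideal of $R$: it is an ideal of $R$, not of $R[x]$, so one should argue directly that if $\p$ contained a finitely generated dense ideal $J$ of $R$, then a polynomial whose coefficients generate $J$ would be a non-zero-divisor lying in $\Pp$, contradicting that $\Pp$ is a $q$-ideal.
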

\begin{proof}
Let $\p=\Pp\cap R$. If $\p[x]\not=\Pp$, then $\p R_{\p}[x]\not=\Pp  R_{\p}[x]$. Since  $R$ is a $\tau_q$-Noetherian ring, $R_\p$ is a Noetherian ring by Lemma \ref{N-loc}. Note that $\p R_{\p}=\Pp  R_{\p}[x]\cap R_\p,$ we have $ht\Pp  R_{\p}[x]=ht(\Pp  R_{\p}[x]\cap R_\p)+1$ by \cite[Theorem 4.4.7]{fk16}. Hence $ht\Pp=ht(\Pp\cap R)+1.$
\end{proof}

\begin{lemma}\label{tprime}
	Let $\p$ be a prime $q$-ideal of $R$. Then $\T(\p[x])$ is a prime ideal of $\T(R[x])$.
\end{lemma}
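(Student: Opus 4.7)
The plan is to exhibit $\T(\p[x])$ as the extension of the prime ideal $\p[x]\subseteq R[x]$ to the localization $\T(R[x])=S^{-1}R[x]$, where $S$ denotes the multiplicative set of non-zero-divisors of $R[x]$, and then to invoke the standard bijection between primes of $S^{-1}R[x]$ and primes of $R[x]$ that are disjoint from $S$.

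First I would observe that since $\p$ is a prime ideal of $R$, its extension $\p[x]$ is a prime ideal of $R[x]$; this is routine. The main step is to show $\p[x]\cap S=\emptyset$. Recall from the introductory discussion of this section that $f\in R[x]$ lies in $S$ precisely when $c(f)$ is a semiregular ideal of $R$. Suppose, toward a contradiction, that some $f\in \p[x]\cap S$ exists. Then $c(f)\subseteq \p$ is a finitely generated semiregular ideal, so it contains (indeed, is) a finitely generated dense subideal of $\p$; this forces $\p$ itself to be semiregular. But by \cite[Proposition 2.7]{ZDC20}, a prime ideal is a $q$-ideal iff it is non-semiregular, contradicting the hypothesis on $\p$. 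Hence $\p[x]\cap S=\emptyset$.

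With these two facts in place, the localization-of-primes correspondence gives that $\p[x]\,\T(R[x])=\p[x]\,S^{-1}R[x]$ is a prime ideal of $\T(R[x])$, and this extension is exactly what is denoted $\T(\p[x])$ (viewing $\p[x]$ as an $R[x]$-submodule of $R[x]$ and localizing at $S$, in keeping with the usage of $\T$ on modules elsewhere in the paper). The main — in fact only — obstacle is the semiregularity step; once the definition of semiregular ideal is unwound, the contradiction is immediate.
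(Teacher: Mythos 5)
Your argument is correct. It takes a slightly different route from the paper's: the paper disposes of the lemma in one line by claiming that $\T(R[x])/\T(\p[x])$ is isomorphic to the total quotient ring of $(R/\p)[x]$, hence an integral domain, so that $\T(\p[x])$ is prime; you instead write $\T(R[x])=S^{-1}R[x]$ with $S$ the non-zero-divisors of $R[x]$, check that $\p[x]$ is prime in $R[x]$ and that $\p[x]\cap S=\emptyset$, and invoke the prime correspondence for localizations. The underlying content is the same (both hinge on $\T$ being localization at $S$), but your version makes explicit the one point the paper leaves implicit: an $f\in S$ has semiregular content by McCoy's theorem, so $f\in\p[x]$ would force $\p$ to be semiregular, contradicting that prime $q$-ideals are exactly the non-semiregular primes. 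This disjointness is precisely what guarantees that the quotient in the paper's proof is a nonzero domain (elements of $S$ map to nonzero, hence non-zero-divisor, elements of $(R/\p)[x]$), so your write-up is if anything more careful; it also avoids the paper's stronger (and not actually needed) identification of the quotient with the full total quotient ring of $(R/\p)[x]$, settling for primality via contraction--extension, which is all the lemma requires.
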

\begin{proof}
It follows that	$\T(R[x])/\T(\p[x])\cong \T(R/\p[x])$ is an integral domain.
\end{proof}

\begin{theorem}
Let $R$ be a $\tau_q$-Noetherian ring. Then 
\begin{center}
$q$-$\dim(R)=q$-$\dim(R[x])=\dim(T(R[x])).$
\end{center}	
\end{theorem}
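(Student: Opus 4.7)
The plan is to handle the two equalities $q$-$\dim(R) = q$-$\dim(R[x])$ and $q$-$\dim(R[x]) = \dim(\T(R[x]))$ separately, drawing on the structural results already developed in this section.

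For the first equality, Theorem \ref{poly} already supplies $q$-$\dim(R) \leq q$-$\dim(R[x])$, so I would focus on the reverse inequality. By Proposition \ref{max}, every maximal $q$-ideal of $R[x]$ has the form $\m[x]$ for some maximal $q$-ideal $\m$ of $R$. The remark at the start of Section 2, that every prime contained in a prime $q$-ideal is itself a prime $q$-ideal, shows that the height of $\m[x]$ computed via chains of prime $q$-ideals agrees with its ordinary height in $\Spec R[x]$. Since $\m[x] = (\m[x] \cap R)[x]$ is visibly an extension, Proposition \ref{Noepol} applies and yields $ht(\m[x]) = ht(\m)$. Taking suprema over all maximal $q$-ideals $\m$ of $R$ then gives $q$-$\dim(R[x]) \leq q$-$\dim(R)$.

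For the second equality, I would exhibit an inclusion-preserving bijection between the prime $q$-ideals of $R[x]$ and the prime ideals of $\T(R[x])$, from which the equality of dimensions follows immediately. Writing $S$ for the set of non-zero-divisors of $R[x]$, so that $\T(R[x]) = R[x]_S$, McCoy's theorem identifies $S$ with $\{f \in R[x] : c(f) \text{ is dense (equivalently semi-regular) in } R\}$. The crux is to show that a prime $\Pp$ of $R[x]$ is disjoint from $S$ precisely when $\Pp$ is a $q$-ideal. The easy direction: if $f \in \Pp \cap S$, then $(f)R[x] \subseteq \Pp$ is a finitely generated sub-ideal whose annihilator is $\ann_R(c(f))[x] = 0$, so $\Pp$ is semi-regular. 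Conversely, given a finitely generated dense sub-ideal $J = (f_1, \ldots, f_n) \subseteq \Pp$, the standard shift-and-add trick produces $F = f_1 + x^{N_1} f_2 + \cdots + x^{N_{n-1}} f_n \in \Pp$ whose content equals $c(f_1) + \cdots + c(f_n) = c(J)$; the density of $J$ in $R[x]$ forces $c(J)$ to be dense in $R$ via $\ann_R(c(J)) = \ann_R(J) \subseteq \ann_{R[x]}(J) = 0$, so $F \in \Pp \cap S$.

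The main technical step is the content-manipulation argument in the second equality, namely that density in $R[x]$ of a finitely generated ideal can always be witnessed by a single polynomial; everything else reduces cleanly to combining Propositions \ref{max} and \ref{Noepol} with Theorem \ref{poly}, and chaining the two equalities delivers the stated conclusion.
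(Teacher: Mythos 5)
Your argument is correct, and while your treatment of the first equality $q$-$\dim(R)=q$-$\dim(R[x])$ is essentially the paper's own (Proposition \ref{max} to see that maximal $q$-ideals of $R[x]$ are exactly the $\m[x]$, then the ``hence'' part of Proposition \ref{Noepol} to get $ht(\m[x])=ht(\m)$), your route to the second equality is genuinely different. The paper compares $\dim(\T(R[x]))$ directly with $q$-$\dim(R)$: it uses Lemma \ref{tprime} to push chains of prime $q$-ideals of $R$ into $\T(R[x])$, then invokes \cite[Proposition 3.9]{ZDC20} to write each maximal ideal of $\T(R[x])$ as $\T(\m[x])$ and computes $ht(\T(\m[x]))=ht(\m)$ through the localization isomorphism $\T(R[x])_{\T(\m[x])}\cong R_\m[x]_{\m R_\m[x]}$ together with the Noetherian dimension formula for $R_\m[x]$. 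You instead set up the order isomorphism between $\Spec(\T(R[x]))$ and the primes of $R[x]$ disjoint from the non-zero-divisors, and identify the latter with the non-semiregular (i.e.\ prime $q$-) ideals of $R[x]$; your shift-and-add argument producing a single non-zero-divisor inside a finitely generated dense ideal is exactly the proof that $R[x]$ has property $\A$, a fact the paper already records in Section 2, so you could simply cite it and shorten that step. What your approach buys is a cleaner separation of hypotheses: it shows $q$-$\dim(R[x])=\dim(\T(R[x]))$ for an arbitrary ring $R$, with the $\tau_q$-Noetherian assumption entering only through the equality $q$-$\dim(R)=q$-$\dim(R[x])$; what the paper's approach buys is the finer pointwise information that each maximal ideal $\T(\m[x])$ of $\T(R[x])$ has height exactly $ht(\m)$, tying the dimensions together at the level of individual maximal $q$-ideals. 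One small bookkeeping point you use implicitly (as does the paper, in the remark after the definition): $q$-$\dim$ of a ring equals the supremum of lengths of chains of prime $q$-ideals, which needs that every prime $q$-ideal lies under a maximal $q$-ideal and that primes below a prime $q$-ideal are again $q$-ideals; both facts are available from \cite{ZDC20}, so this is harmless.
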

\begin{proof}
Let $\M$ be a maximal $q$-ideal of $R[x]$. Then by Proposition \ref{max}, there exists a 	maximal $q$-ideal $\m$ of $R$ such that $\M=\m[x]$. So by Proposition \ref{Noepol}, $ht(\M)=ht(\m[x])=ht(\m[x]\cap R)=ht(\m)$. Thus $q$-$\dim(R)=q$-$\dim(R[x])$. 
	
Let $\p$ be a prime $q$-ideal of $R$. Then $\T(\p[x])$ is a prime ideal of $\T(R[x])$ by lemma \ref{tprime}. So $q$-$\dim(R)\leq \dim(T(R[x])).$ On the other hand, suppose $\M$ is a maximal ideal of $\T(R[x])$. Then there is a maximal $q$-ideal $\m$ of $R$ such that $\M= \T(\m[x])$ by \cite[Proposition 3.9]{ZDC20}. Note that $\T(R[x])_{\T(\m[x])}\cong R[x]_{\m[x]}\cong R_\m[x]_{\m R_\m[x]}$ since every element in $\m[x]$ is a zero-divisor. Since $R_\m$ is a Noetherian ring, $ht(\M)=\dim(\T(R[x])_{\T(\m[x])})=ht(\m R_\m[x])=ht(\m R_\m)=ht(\m)\leq q$-$\dim(R)$ by \cite[Theorem 4.4.7]{fk16}. Hence,  $q$-$\dim(R)=\dim(\T(R[x])).$
\end{proof}

\begin{remark}
It follows by Theorem \ref{poly} that $q$-$\dim(R)\leq q$-$\dim(R[x])\leq 2q$-$\dim(R)$ for any ring $R$. For any pair of integers $(n,m)$ of integers with $1\leq n\leq m\leq 2n$, it is  interesting to construct rings with $q$-$\dim(R)=n$ and $q$-$\dim(R[x])=m$. 
\end{remark}

\begin{acknowledgement}\quad\\
The first author was supported by National Natural Science Foundation of China (No. 12061001). 
\end{acknowledgement}

\end{document}